\newcounter{num}[section] %
\newenvironment{theo}
{\refstepcounter{num}%
\bigskip\noindent{\bf Theorem~\arabic{section}.\arabic{num}. }\it}
\newenvironment{lemma}
{\refstepcounter{num}%
\bigskip\noindent{\bf Lemma~\arabic{section}.\arabic{num}. }\it}
\newenvironment{example}
{\refstepcounter{num}%
\bigskip\noindent{\bf Example~\arabic{section}.\arabic{num}.}}
\newcommand{\Ref}[1]{(\ref{#1})}
\newcounter{thepic}
\newenvironment{eq}{\begin{equation}}{\end{equation}}
\newcommand{\si}{\sigma}
\newcommand{\al}{\alpha}
\newcommand{\be}{\beta}
\newcommand{\ga}{\gamma}
\newcommand{\de}{\delta}
\newcommand{\LA}{\langle}
\newcommand{\RA}{\rangle}
\newcommand{\un}[1]{{\underline{#1}} }
\newcommand{\tr}{\mathop{\rm tr}}
\newcommand{\Char}{\mathop{\rm char}}
\newcommand{\X}{\LA X\RA}
\newcommand{\gl}{{\rm gl}}
\renewcommand{\sl}{{\rm sl}}
\newcommand{\ad}{{\rm ad}}
\newcommand{\FF}{{\mathbb{F}}}   
\newcommand{\NN}{{\mathbb{N}}}
\newcommand{\QQ}{{\mathbb{Q}}}
\newcommand{\algA}{\mathcal{A}}    
\newcommand{\algL}{\mathcal{L}}    
\newcommand{\F}{\mathcal{F}}    
\newcommand{\mylabel}[1]{}
\newcommand{\Fspan}{\FF\text{\rm-span}}
\begin{document}

\title[Identities for $\gl_2$]{Identities for the Lie algebra $\gl_2$ over an infinite field of characteristic two}%
%
%
\author{Artem Lopatin }
\thanks{This research was supported by RFBR 16-31-60111 (mol\_a\_dk).}
\address{Artem Lopatin\\
Sobolev Institute of Mathematics, Omsk Branch, SB RAS, Omsk, Russia}
\email{artem\_lopatin@yahoo.com}

\begin{abstract} 
In 1970 Vaughan-Lee established that over an infinite field of characteristic two the ideal $T[\gl_2]$ of all polynomial identities for the Lie algebra $\gl_2$ is not finitely generated as a T-ideal. But a generating set for this ideal of polynomial identities was not found. We establish 
some generating set for the T-ideal $T[\gl_2]$. 

\noindent{\bf Keywords: } polynomial identities, PI-algebras, Lie algebras, minimal generating set, matrix algebra, Specht problem.


\end{abstract}

\maketitle
%

\section{Introduction}\label{section_intro}

We consider all vector spaces, algebras and modules over an infinite field $\FF$ of arbitrary characteristic $p=\Char{\FF}$ unless otherwise stated. Denote by $\gl_n$ be the Lie algebra of all $n\times n$ matrices over $\FF$. We write $x_1\cdots x_n$ for the left-normalized product $(\cdots((x_1x_2)x_3)\cdots x_n)$ and define the adjoint $\ad x$ by $y(\ad x)=yx$.

We start with basic definitions. Denote by $\algL(X)$ the free Lie algebra freely generated by $x_1,x_2,\ldots$ Given a Lie algebra $\algL$, an element $f$ of the free Lie algebra $\algL(X)$ is called a {\it polynomial identity} for  $\algL$ if  $f(a_1,\ldots,a_k)=0$ for all $a_1,\ldots,a_k$ from $\algL$, where $f(a_1,\ldots,a_k)$ stands for the result of substitution $x_i\to a_i$ in $f$. In other words, a polynomial identity is a universal formula that holds on the algebra. For short, polynomial identities are called identities. The ideal of all identities for $\algL$ is denoted by $T[\algL]$. If a Lie algerba satisfies a non-trivial identity, then it is called a Lie PI-algebra. An ideal $I\lhd\algL(X)$ is called a {\it T-ideal} if $I$ is stable with respect to all substitutions $x_1\to f_1, x_2\to f_2,\ldots$ for $f_1,f_2,\ldots\in\algL(X)$. Note that  $T[\algL]$ is a T-ideal and any T-ideal $I$ is equal to $T[\algL]$ for some $\algL$. For example, we can take $\algA=\FF\X / I$. A T-ideal $I$ is called {\it finitely based} if it is finitely generated as T-ideal. 

In 1590 Specht posted the following problem for associative algebras: is the T-ideal of identities finitely based for every algebra? (Definfitions can be naturally extended from the case of Lie algebras). In 1986 Kemer established a positive solution for this problem in case $p=0$. On the other hand, in case $p>0$ there exists non-finitely based ideals of identities (see Belov, Grishin, Shchigolev, 1999). 

As about the case of Lie algebras, In 1970 Vaughan--Lee~\cite{VaughanLee_1970} established that over an arbitrary infinite field of the characteristic two there exists a finite dimensional algebra such that its T-ideal of identities is not finitely based. Namely, the T-ideal $T[\gl_2]$ is not finitely based, but a et generating T-ideal $T[\gl_2]$ was not found. In 1974 Drensky~\cite{Drensky_1974} extended this result as follows. For an arbitrary infinite field of positive characteristic he constructed an example of finitely dimensional Lie algebra which T-ideal is not finitely based. In 1992 Il'tyakov~\cite{Iltyakov_1992} established that T-ideal of identities for any finite dimensional Lie algebra over a field of chacteristic zero is finitely based. Note that the following question is still open: does any Lie algebra over a field of characteristic zero have a finitely based T-ideal of identities?

It is an interesting problem to explicitly describe a generating set for the T-ideal of identities of the  particular Lie algebra. The identities for $\gl_2$ and $\sl_2$ were found by several mathematicians over any infinite field with the exception of identities for $\gl_2$ in the characteristic two case. Over a field of characteristic zero Razmyslov~\cite{Razmyslov_1973} found a finite set generating the T-ideal of identities for $\sl_2$. A minimal generating set for the T-ideal $T[\sl_2]$ was established by Filippov~\cite{Filippov_1981}. Working over an infinite field of characteristic different from two Vasilovskii~\cite{Vasilovskii_1989} showed that any identity for $\sl_2$ follows from the single identity
$$yz(tx)x+yx(zx)t=0.$$
Over a field of characteristic different from two identities for $\gl_2$ coincide with identites for $\sl_2$, since $A-\frac{1}{2}\tr(A)E$ satisfies every identity for $\sl_2$, where $A\in\gl_2$. It is not difficult to see that over an arbitrary (finite or infinite) field of characteristic two the ideal of identities for $\sl_2$ is generated by a single identity $xyz$. In 2009 Krasilnikov~\cite{Krasilnikov_2009} showed that the T-ideal of the certain subalgebra of $\gl_3(\QQ)$ is finitely based. 

We obtained an explicit description of identities (the proof is given at the end of the paper). 

\begin{theo}\label{theo-main}\mylabel{theo-main} In the case of an infinite field $\FF$ of the characteristic two the T-ideal of identities for $\gl_2$ is generated by the following identities:
\begin{enumerate}    
\item[(a)] $(x_1x_2)(x_3x_4)x_5$;

\item[(b)] $(x_1x_2)(x_1x_2\cdots x_k)$, where $k>2$;

\item[(c)] $(x_1x_2)(x_3x_4)+(x_1,x_3)(x_2x_4)+(x_1x_4)(x_2x_3)$;

\item[(d)] $(x_1x_2)(x_3x_4\cdots x_m)+(x_1x_3)(x_2x_4\ldots x_m) +(x_1x_4)(x_2x_3\cdots x_m)$, where $m>4$. 
\end{enumerate}
\end{theo}

\section{The known results on identities of $\gl_2$}
In the rest of this paper we assume that $\FF$ is an infinite field of the characteristic two. Denote $\NN_0=\{0,1,2,\ldots\}$. Given two $2\times 2$ matrices $A$ and $B$, we write $AB$ for the Lie product $AB=[A,B]=A\cdot B - B\cdot A$, where $A\cdot B$ is the usual (associative) matrix multiplication. 

Introduce some notations:
$$a=E_{22},\; b=E_{12},\; c=E_{21},$$
where $E_{i,j}$ is $2\times2$ matrix with the only non-zero entry in position $(i,j)$. Then $a$, $b$, $c$, $bc=E_{11}+E_{22}$ is a base for $\gl_2$. Note that 
$$ba=b,\; ca=c,\; (bc)x=0 \text{ for each }x\in \gl_2.$$
Considering identities of $\gl_2$, for short sometimes we write $i$ instead of letter $x_i$. Moreover, we write $1 2 \cdots k$ for $(\cdots((12)3)\cdots )k$. As an example, the Jacobi identity is $123+231+312=0$. 

In this section we consider some facts obtained in~\cite{VaughanLee_1970}.

\begin{lemma}\label{L1e2} \mylabel{L1e2} 
The following identities hold in $\gl_2$:
\begin{enumerate}
\item[(1)] $(12)(34)5 = 0$; 

\item[(2)] $(1\cdots n)(12) = 0$, where $n\geq2$.
\end{enumerate}
\end{lemma}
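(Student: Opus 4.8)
The plan is to prove both identities by direct computation on the basis $a=E_{22}$, $b=E_{12}$, $c=E_{21}$, $bc=E_{11}+E_{22}$ of $\gl_2$, exploiting the fact that $T[\gl_2]$-membership of a \emph{multilinear} element can be checked on basis substitutions, so it suffices to verify the identities when each $x_i$ is replaced by one of $a,b,c,bc$. The key structural observation, already recorded in the excerpt, is that $(bc)x=0$ for all $x$, so any substitution assigning $bc$ to a variable that is not in the innermost position of a left-normalized bracket gives zero; moreover $ba=b$, $ca=c$, and $bc=cb$ (up to sign, but we are in characteristic two), while $bb=cc=aa=0$.

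For part (1), $(12)(34)5=0$: I would argue that $(12)$ and $(34)$ are each products of two basis elements, hence lie in the span of $\{b,c,bc\}$ — indeed the commutator of any two of $a,b,c,bc$ is one of $0$, $b$, $c$, or $bc$ (for instance $bc$ itself from $[b,c]$, $b$ from $[b,a]$, $c$ from $[c,a]$, and $0$ in all remaining cases). Then $(12)(34)$ is a commutator of two elements of $\Fspan\{b,c,bc\}$. A short case check shows $[b,c]=bc$, $[b,bc]=0$, $[c,bc]=0$, and $[b,b]=[c,c]=0$, so $(12)(34)\in\Fspan\{bc\}$. Finally $(bc)5=0$ for every basis element $5$, so the whole expression vanishes. (One can organize this more slickly by noting $(12)(34)5$ is skew in the obvious way and that the derived subalgebra $[\gl_2,\gl_2]=\sl_2$ has $[\sl_2,\sl_2]$ one-dimensional, spanned by $E_{11}+E_{22}$, which is central-acting in characteristic two.)

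For part (2), $(1\cdots n)(12)=0$ for $n\ge 2$: the element $w=(1\cdots n)$ is a left-normalized Lie monomial of degree $n$, hence lies in $[\gl_2,\gl_2]=\Fspan\{b,c,bc\}$ when evaluated on basis elements. Then $(1\cdots n)(12)$ is the commutator of $w\in\Fspan\{b,c,bc\}$ with $(12)\in\Fspan\{b,c,bc\}$, and by the same bracket table as above this lands in $\Fspan\{bc\}$; but we must get exactly zero, not merely something central. Here the finer point is that $(12)$, being a commutator of \emph{two} basis elements, actually lies in $\Fspan\{b,c\}$ whenever it is nonzero (the value $bc$ only arises from $[b,c]$, i.e. when $\{1,2\}=\{b,c\}$), and in that exceptional case $w=(1\cdots n)$ with the first two arguments $b,c$ starts as $(bc)x_3\cdots=0$. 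So in all nonzero situations $(12)\in\Fspan\{b,c\}$, $w\in\Fspan\{b,c,bc\}$, and $[w,(12)]$ reduces to brackets among $b,c,bc$ restricted so that the $bc$-component of $(12)$ is absent; checking $[b,b]=[c,c]=[b,c]=bc$ shows we can still get $bc\ne 0$ in general, so this is not quite enough — the real reason is that when $w$ itself has a nonzero $bc$-component it is forced (being left-normalized of length $n$) to have come through a $[\cdot,a]$ tower that is incompatible with $(12)$ being $[b,c]$; this bookkeeping is where the argument needs care.

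The main obstacle I anticipate is exactly this last bookkeeping in part (2): making the case analysis on the $4^n$ (really far fewer, after killing $bc$ in non-innermost slots) basis substitutions clean enough to present without it degenerating into a brute-force table. I would handle it by first reducing to substitutions using only $a,b,c$ (since $bc$ in any but the innermost slot kills the term, and $bc$ in the innermost slot of $(1\cdots n)$ makes $w$ a multiple of $bc$ which then brackets trivially with anything, while $bc$ inside $(12)$ only occurs in the treated exceptional case), then noting that with arguments drawn from $\{a,b,c\}$ the monomial $1\cdots n$ is governed by the relations $ba=b$, $ca=c$, $bb=cc=aa=0$, $[b,c]=bc$, $(bc)x=0$, which collapse $(1\cdots n)$ to either $0$, $b$, $c$, or $bc$ according to a simple rule on the word in $\{a,b,c\}$, after which $(1\cdots n)(12)=0$ follows by inspection. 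Alternatively, the whole lemma follows at once from the classical fact that $\sl_2$ in characteristic two satisfies $xyz=0$ (stated later in the introduction) together with $[\gl_2,\gl_2]=\sl_2$ and $A-\tfrac12\tr(A)E$ — except $\tfrac12$ is unavailable in characteristic two, so instead one uses that $[AB,C]$ already lies in $\sl_2$ and $[\sl_2,\sl_2,\sl_2]=0$, giving (1) immediately and (2) after observing $(1\cdots n)$ and $(12)$ both lie in $\sl_2$ while their bracket lies in $[\sl_2,\sl_2]$, which is spanned by $E_{11}+E_{22}$ and annihilated by the further action implicit in... — but since (2) is a single bracket, not a triple product, this shortcut does not directly apply, and the direct basis computation is the safe route.
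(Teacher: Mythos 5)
Your argument for part (1) is correct and complete: $(12)$ and $(34)$ are sums of commutators of basis elements, hence lie in $\Fspan\{b,c,bc\}$, their bracket lies in $\Fspan\{bc\}$, and $bc=E_{11}+E_{22}$ is the identity matrix in characteristic two, hence central. (The paper itself gives no proof of this lemma --- it is imported from Vaughan-Lee --- so there is nothing to compare against there.)

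Part (2), however, has a genuine gap, and in fact two. First, the reduction to basis substitutions is not available: $(1\cdots n)(12)$ is multihomogeneous of degree $2$ in $x_1$ and in $x_2$, not multilinear, so vanishing on all substitutions of $x_i$ by basis elements does not imply that it is an identity --- one would also have to check all partial linearizations on basis elements, which you never address. Second, even granting the reduction, you explicitly stop at the decisive point (``this bookkeeping is where the argument needs care''), conceding that the span argument only places $(1\cdots n)(12)$ in $\Fspan\{bc\}$ without forcing it to vanish; and your fallback via $xyz=0$ in $\sl_2$ fails for exactly the reason you state. The clean way to close the gap works with arbitrary matrices rather than basis elements. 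Define $\la(X,Y)$ for $X,Y\in\sl_2$ by $[X,Y]=\la(X,Y)(E_{11}+E_{22})$; writing $X=x_1E_{12}+x_2E_{21}+x_3(E_{11}+E_{22})$ one gets $\la(X,Y)=x_1y_2+x_2y_1$, an alternating form whose radical contains $E_{11}+E_{22}$. A direct computation shows that for any $A\in\gl_2$ the map $\ad A$ preserves $\sl_2$ and acts on $\sl_2/\Fspan\{E_{11}+E_{22}\}$ as the scalar $\tr(A)$ (this is where characteristic two enters). Hence, with $u=(12)\in\sl_2$,
$$\la\bigl((1\cdots n),(12)\bigr)=\la\bigl(u\,\ad x_3\cdots\ad x_n,\;u\bigr)=\Bigl(\prod_{i=3}^{n}\tr(x_i)\Bigr)\la(u,u)=0,$$
since $\la$ is alternating; therefore $(1\cdots n)(12)=0$. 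I recommend replacing your case analysis for part (2) by this argument (or the equivalent induction on $n$), which also reproves part (1) as a byproduct.
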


Denote by $\F$ the quotient of the free Lie algebra with free generators $y_1,y_2,\ldots$ by the ideal generated by the identity $(12)(34)5=0$. Since $(12)(34)5 = 0$ is an identity of $\gl_2$, we can consider the T-ideal of identities of $\gl_2$ as a T-ideal in $\F$; in this case we write down $T[\gl_2]\lhd \F$. The algebra $\F$ has $\NN_0$-grading by the degrees and $\NN_0^n$-grading by multidegrees.
For short, we denote the multidegree $(1,\ldots,1)$ ($n$ times) by $1^n$. An element is called multilinear if it is $\NN_{0}^n$-homogeneous and its degree with respect to each letter is either 0 or 1. Note that an element of multidegree $1^n$ is always multilinear but the inverse statement does not hold. 

\begin{lemma}\label{LFid} \mylabel{LFid} 
The following identities hold in $\F$:
\begin{enumerate}
\item[(1)] $(125)(34) = (12)(345)$; 

\item[(2)] $(xy\, g_1 \cdots g_n)(uv) = (xy\, g_{\si(1)}\cdots g_{\si(r)}) (uv\, g_{\si(r+1)}\cdots g_{\si(n)})$, where  $n\geq1$, $0\leq r\leq n$ and $\si\in S_n$. 
\end{enumerate}
\end{lemma}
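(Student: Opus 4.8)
The plan is to reduce both parts to one structural fact about $\F$, which I will call \emph{centrality}: for all $p,q,r,s\in\F$ the element $(pq)(rs)$ lies in the centre of $\F$, since $(pq)(rs)t=0$ for every $t\in\F$ is a substitution instance of the relation $(12)(34)5=0$ defining $\F$. Consequently, if $P$ and $Q$ are Lie monomials of degree $\ge 2$ — each being literally a bracket of two elements of $\F$, say by splitting off the last letter — then $[P,Q]$ is central, and in particular $\bigl[[P,Q],R\bigr]=0$ for every $R\in\F$. Since $\Char\FF=2$ I will drop all signs without comment.

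For part $(1)$ I would just apply the Jacobi identity to the outermost bracket:
$$(125)(34)=[[(12),5],(34)]=[[(12),(34)],5]+[(12),[5,(34)]]=(12)(34)5+(12)(345),$$
where $[5,(34)]=(345)$ holds in characteristic two. The first summand is $0$ by centrality, so $(125)(34)=(12)(345)$. (This is the case $n=1$, $r=0$ of part $(2)$; it could equally be read off from Claim B below.)

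For part $(2)$ I would establish two closure properties of an element $F=(xy\,g_1\cdots g_n)(uv\,h_1\cdots h_m)$, $n,m\ge 0$, and then combine them. \textbf{Claim A (reordering).} $F$ is unchanged under any permutation of $g_1,\dots,g_n$, and also under any permutation of $h_1,\dots,h_m$. It is enough to handle an adjacent transposition, which one does by applying Jacobi inside the relevant factor: the correction term has the form $\bigl[(\text{a prefix monomial}),[g_i,g_{i+1}]\bigr]$ (respectively with $h$'s), hence is central, and is therefore killed either by the next left-normalised bracketing with $g_{i+2}$ (respectively $h_{j+2}$), or, when the transposed pair is at the very end of its factor, by the outer bracketing with the other factor. \textbf{Claim B (transfer).} For $n\ge 1$, $(xy\,g_1\cdots g_n)(uv\,h_1\cdots h_m)=(xy\,g_1\cdots g_{n-1})(uv\,h_1\cdots h_m\,g_n)$. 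Writing the left factor as $[A',g_n]$ with $A'=(xy\,g_1\cdots g_{n-1})$ and the right factor as $B=(uv\,h_1\cdots h_m)$, Jacobi gives $[[A',g_n],B]=[[A',B],g_n]+[A',[g_n,B]]$; the first term is $0$ because $[A',B]$ is central (both $A'$ and $B$ are Lie monomials of degree $\ge 2$), and the second is $[A',(uv\,h_1\cdots h_m\,g_n)]$, which is the claimed right-hand side. Granting Claims A and B, I start from $(xy\,g_1\cdots g_n)(uv)$, use Claim A to list the $g$'s in the order $g_{\si(1)},\dots,g_{\si(n)}$, apply Claim B exactly $n-r$ times to move $g_{\si(n)},\dots,g_{\si(r+1)}$ one at a time onto the right-hand factor, and then apply Claim A once more to reorder the transferred letters into $g_{\si(r+1)},\dots,g_{\si(n)}$; this yields the asserted identity.

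The one point requiring care is the bookkeeping inside the Jacobi steps: in each case one must confirm that the correction term genuinely has the shape $(pq)(rs)$ or a bracket of such a term with something, i.e. that every truncated prefix $(xy\,g_1\cdots g_k)$ or $(uv\,h_1\cdots h_l)$ occurring along the way has degree $\ge 2$. This is automatic, since each such prefix contains $(xy)$, respectively $(uv)$, as an initial subword. I do not anticipate any real obstacle beyond this routine verification.
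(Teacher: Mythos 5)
Your proof is correct. Note that the paper itself gives no argument for this lemma (it is quoted from Vaughan-Lee's 1970 paper), so there is nothing to compare against; your derivation is a valid self-contained one. The two ingredients you use — that every substitution instance of $(x_1x_2)(x_3x_4)x_5$ vanishes in $\F$, so that $(\F^2)^2$ is central, and repeated use of the Jacobi identity with the correction terms killed by centrality — are exactly what is needed, and your bookkeeping (every truncated prefix contains $(xy)$ or $(uv)$ and hence is a bracket of two elements of $\F$) closes the only point where the argument could have leaked. The one cosmetic remark: after the $n-r$ applications of Claim B the transferred letters arrive on the right-hand factor in the order $g_{\si(n)},\dots,g_{\si(r+1)}$, so the final application of Claim A is genuinely needed; you noticed this, so the proof is complete.
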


\begin{lemma}\label{LF} \mylabel{LF} 
The algebra $\F^2$ satisfies the following properties:
\begin{enumerate}
\item[(1)] $\F^2=\Fspan\{y_{i_1}\cdots y_{i_k}\,|\, k\geq 2,\; i_1>i_2\leq i_3\}$; 

\item[(2)] $\F^2=\Fspan\{y_{i_1}\cdots y_{i_k}\,|\, k\geq 2,\; i_1>i_2\leq i_3\leq \cdots\leq i_k\}$ modulo ($\F^2)^2$; 

\item[(3)] $(\F^2)^2$ is $\FF$-span of $(y_{i_1}\cdots y_{i_k})(y_{i_{k+1}}y_{i_{k+2}})$, where $k\geq 2$ and  
\begin{enumerate}
\item[$\bullet$] $i_1>i_2\leq i_3\leq \cdots\leq i_k$,

\item[$\bullet$] $i_{k+1}>i_{k+2}\leq i_3$, 

\item[$\bullet$] $i_2\geq i_{k+2}$,

\item[$\bullet$] $i_1\geq i_{k+1}$ in case $i_2=i_{k+2}$. 
\end{enumerate}

\item[(4)] $(\F^2)^3=0$. 
\end{enumerate}
\end{lemma}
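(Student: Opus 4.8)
The plan is to establish the four parts of Lemma~\ref{LF} in order, using at each stage the structural facts already recorded in Lemmas~\ref{L1e2} and~\ref{LFid} together with standard free-Lie-algebra combinatorics in characteristic two. The underlying idea throughout is that $\F$ is defined by the relation $(12)(34)5=0$, so $(\F^2)^3=0$ is almost immediate and should be proved first: any product of three elements of $\F^2$ is a linear combination of left-normalized monomials of the form $(uv)(wz)t\cdots$, and after pulling one bracketed pair to the front via Lemma~\ref{LFid}(2) this contains the factor $(12)(34)5$, hence vanishes. This gives part~(4), and it also tells us that in $\F^2$ modulo $(\F^2)^2$ we are working in an abelian-by-nilpotent situation, which is what makes the spanning statements manageable.

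For part~(1), I would start from the Jacobi identity and the fact that in $\F$ every element of $\F^2$ is spanned by left-normalized products $y_{i_1}\cdots y_{i_k}$ with $k\ge 2$. The point is to show one can always arrange $i_1>i_2$ and $i_2\le i_3$. The condition $i_1>i_2$ is achieved by antisymmetry $y_iy_j=-y_jy_i=y_jy_i$ (char.\ two) on the first pair, handling the case $i_1=i_2$ by noting $(y_iy_i)=0$. The condition $i_2\le i_3$ is the substantive one: if $i_2>i_3$ one rewrites $((y_{i_1}y_{i_2})y_{i_3})w$ using Jacobi to move the smaller index inward, i.e.\ $(ab)c=(ac)b+a(bc)$ with appropriate sign conventions, and then uses that the term $a(bc)\cdots$ lies in $(\F^2)^2$ when $w$ is nonempty or is itself of the good form when $w$ is empty — but since we only claim a spanning set for all of $\F^2$ (not modulo $(\F^2)^2$), we must instead iterate carefully so that the rewriting terminates. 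A clean way is induction on the pair $(k,i_2)$ or on a suitable monomial order: each application of Jacobi at the front either strictly decreases $i_2$ or produces shorter/already-sorted monomials, so the process stops with all monomials satisfying $i_1>i_2\le i_3$.

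For part~(2), I would further sort the tail $i_3\le i_4\le\cdots\le i_k$ working modulo $(\F^2)^2$. Here Lemma~\ref{LFid}(2) is the key tool: modulo $(\F^2)^2$ one still has enough freedom to permute the ``tail'' letters $g_1,\ldots,g_n$ of a monomial $(y_{i_1}y_{i_2})g_1\cdots g_n$, because the obstruction to permuting them — the extra bracketed term coming from commuting two adjacent tail letters — lands in $(\F^2)^2$ by Jacobi (it has the shape $(y_{i_1}y_{i_2})(g_jg_{j+1})g_{j+2}\cdots$, a product of two $\F^2$-elements). Combined with part~(1) this yields the fully sorted spanning set claimed in~(2).

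Part~(3) is where I expect the real work to be, and it is the main obstacle. One must produce a spanning set for $(\F^2)^2$ consisting of products $(y_{i_1}\cdots y_{i_k})(y_{i_{k+1}}y_{i_{k+2}})$ with the four listed normalization constraints, and the subtlety is that $(\F^2)^2$ is a quotient of $\F^2\otimes\F^2$ with many relations: Lemma~\ref{LFid}(1)–(2) let one redistribute tail letters between the two factors and reorder them, Lemma~\ref{L1e2}(2) kills $(y_1\cdots y_n)(y_1y_2)$-type terms, and part~(4) forces everything to be ``degree $2$'' in the commutator filtration. I would first use part~(2) to normalize the long left factor so that $i_1>i_2\le i_3\le\cdots\le i_k$, then use Lemma~\ref{LFid}(2) to move all tail letters of the right factor that are ``too large'' into the left factor and re-sort, forcing $i_{k+1}>i_{k+2}\le i_3$; the condition $i_2\ge i_{k+2}$ and the tie-break $i_1\ge i_{k+1}$ when $i_2=i_{k+2}$ then come from swapping the roles of the two factors (the product of two $\F^2$-elements is symmetric up to sign, which in char.\ two is genuine symmetry, modulo $(\F^2)^3=0$) and applying Lemma~\ref{LFid}(1) to shift a single letter across. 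The bookkeeping — checking that each normalization step does not undo a previous one, and that the relations from Lemma~\ref{LFid} suffice to reach the claimed canonical form — is the technical heart; I would organize it as a reduction with respect to an explicit monomial order on the pairs $(i_1,\ldots,i_{k+2})$ and verify termination.
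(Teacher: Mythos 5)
The paper offers no proof of Lemma~\ref{LF} at all: it is stated in Section~2 among the ``facts obtained in~\cite{VaughanLee_1970}'' and used as imported input, so there is no in-house argument to compare yours against. Judged on its own terms, your handling of parts (1), (2) and (4) is essentially correct. For (4), since $\F^2$ is spanned by brackets $uv$ with $u,v\in\F$, every generator of $(\F^2)^3$ is a substitution instance of $(x_1x_2)(x_3x_4)x_5$ and vanishes outright; Lemma~\ref{LFid}(2) is not even needed. For (1), a single application of the Jacobi identity $(ab)c=(ac)b+(bc)a$ to a left-normalized monomial with $i_1>i_2>i_3$ already produces two monomials of the required shape, so the termination worry is moot; note however that your passing claim that ``$a(bc)w$ lies in $(\F^2)^2$'' is false --- $a$ is a single generator, so $a(bc)w=(bc)aw$ is just another left-normalized monomial --- and it is fortunate that you discard that route. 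For (2), the key observation that the commutation error $(y_{i_1}y_{i_2}\cdots)(g_jg_{j+1})g_{j+2}\cdots$ either dies by the defining relation of $\F$ or lands in $(\F^2)^2$ is exactly right.

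The genuine gap is part (3). The tools you list are the correct ones: push all tail letters into one factor via Lemma~\ref{LFid}(2); normalize each factor modulo $(\F^2)^2$, the discrepancy being absorbed by $(\F^2)^3=0$; and use the characteristic-two symmetry $uv=vu$ of the bracket together with another application of Lemma~\ref{LFid}(2) to swap the two factors when the globally minimal index sits in the long factor, which is what produces $i_2\ge i_{k+2}$ and the tie-break $i_1\ge i_{k+1}$. But you explicitly defer the reduction itself --- the choice of monomial order, the verification that each normalization step preserves the inequalities already achieved (for instance, that re-sorting the new long factor after a swap does not disturb the two letters of the short factor, and that forcing $i_{k+1}>i_{k+2}\le i_3$ does not break $i_2\ge i_{k+2}$), and termination --- and that verification is the entire content of part (3). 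As written, (3) is a plausible programme rather than a proof, and it is precisely the part on which the rest of the paper (Lemma~\ref{L9mine}) leans.
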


\begin{lemma}\label{LFid2} \mylabel{LFid2}
\begin{enumerate} 
\item[(1)] The element $(y_1\cdots y_n)(y_1y_2)$ with $n\geq3$ does not belong to the $T$-ideal, generated by $\{(1\cdots k)(12)\,|\,k\geq3, k\neq n\}$ in $\F$. In particular, $(y_1\cdots y_n)(y_1y_2)\neq0$ in $\F$.

\item[(2)] The result of the complete or a partial linearization of $(y_1\cdots y_n)(y_1y_2)$ in $\F$ is zero.

\item[(3)] For every non-zero identity $f\in T[\gl_2]\lhd \F$ we have $f\in (\F^2)^2$.

\item[(4)] Assume that a non-zero identity $f\in T[\gl_2]\lhd \F$ is  not multilinear. Then $f$ an element of the T-ideal of $\F$ generated by $(1\cdots n)(12)$, where $n\geq2$.
\end{enumerate}
\end{lemma}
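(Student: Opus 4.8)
\medskip
\noindent\emph{Proof plan.} The plan is to treat the four parts separately; the genuine difficulty is concentrated in part~(4).

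\emph{Part (1).} The first step is to observe that $(\F^2)^2$ is central in $\F$: it is spanned by brackets $[[a,b],[c,d]]$, and $[[[a,b],[c,d]],e]$ is a value of the defining identity $(x_1x_2)(x_3x_4)x_5$, hence vanishes. Since every substitution instance $\phi\big((1\cdots k)(12)\big)$ already lies in $(\F^2)^2$, the $T$-ideal generated by $\{(1\cdots k)(12):k\ge3,\ k\ne n\}$ is simply the $\FF$-span of these instances. I then pass to the multidegree $\mu=(2,2,1,\dots,1)$ of $w_n=(y_1\cdots y_n)(y_1y_2)$ and split each substitution into multihomogeneous pieces: a nonzero instance of $(1\cdots k)(12)$ has total degree $\ge k+2$, with equality only when every $\phi(y_i)$ is a single letter, so $k>n$ contributes nothing in degree $n+2$; and for $k<n$ some $\phi(y_i)$ must lie in $\F^2$, whereupon, if $i\ge3$, inserting this factor into the long bracket pushes the instance into the central ideal $(\F^2)^2$ and the remaining bracketings (including the outer product) annihilate it, while if $i\in\{1,2\}$, since a nonzero multihomogeneous element of $\F^2$ of degree $\ge2$ has at least two letters in its support and $y_i$ occurs twice, the instance has too many letters of multiplicity $\ge2$ and cannot have multidegree $\mu$. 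So the $\mu$-component of this $T$-ideal is zero. Finally, rewriting $(y_1y_2)=(y_2y_1)$ (using $\Char\FF=2$) identifies $w_n$ with one of the basis vectors of $(\F^2)^2$ from Lemma~\ref{LF}(3); hence $w_n\ne0$ and $w_n$ lies outside that $T$-ideal.

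\emph{Part (2).} Linearising $y_1\mapsto u+v$ in $w_n$ gives $(u\,y_2\cdots y_n)(v\,y_2)+(v\,y_2\cdots y_n)(u\,y_2)$. By Lemma~\ref{LFid}(2) I move the letters $y_3,\dots,y_n$ onto the length-two factor in each summand, obtaining $(u\,y_2)(v\,y_2\,y_3\cdots y_n)+(v\,y_2)(u\,y_2\,y_3\cdots y_n)$, and since $AB=BA$ in characteristic two and Lemma~\ref{LFid}(2) applies once more, $(u\,y_2)(v\,y_2\cdots y_n)=(v\,y_2\cdots y_n)(u\,y_2)=(v\,y_2)(u\,y_2\cdots y_n)$; so the two summands coincide and the sum is $2(\cdots)=0$. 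The linearisation in $y_2$ is symmetric, and any complete or partial linearisation follows by iterating this step.

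\emph{Part (3).} Reducing to multihomogeneous $f$ (each multihomogeneous component of an identity is an identity): if $f\notin\F^2$, then a component of degree one in a single variable $y_j$ is a nonzero identity $\la y_j$, which is impossible on $\gl_2$, so $f\in\F^2$. If $f\in\F^2\setminus(\F^2)^2$, pass to its nonzero image in $\F^2/(\F^2)^2$, which by Lemma~\ref{LF}(2) has the basis $\{y_{i_1}\cdots y_{i_k}:i_1>i_2\le i_3\le\cdots\le i_k\}$. Using the surjection $\gl_2\to M:=\gl_2/\LA bc\RA$ and evaluating $y_t\mapsto\al_t\ov a+\be_t\ov b+\ga_t\ov c$ in $M$ (where $\ov b\,\ov a=\ov b$, $\ov c\,\ov a=\ov c$, $\ov b\,\ov c=0$), one computes $y_{i_1}\cdots y_{i_k}\mapsto\al_{i_k}\cdots\al_{i_3}\big((\al_{i_1}\be_{i_2}+\al_{i_2}\be_{i_1})\ov b+(\al_{i_1}\ga_{i_2}+\al_{i_2}\ga_{i_1})\ov c\big)$. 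In a fixed multidegree these basis monomials are indexed by the value $i_1$, running over the distinct letters exceeding $\min\supp$, and the $\ov b$-part of the image of each carries the monomial $\al^{\de-e_{i_1}}\be_{i_1}$, which occurs in no other image. Hence these images are linearly independent, so $\F^2/(\F^2)^2$ contains no nonzero identity of $M$, a fortiori none of $\gl_2$; this contradiction forces $f\in(\F^2)^2$.

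\emph{Part (4).} By (3) I may take $f$ multihomogeneous of some multidegree $\de$ with $\de_j\ge2$ for a letter $j$, lying in $(\F^2)^2$, and write it in the basis of Lemma~\ref{LF}(3). Evaluating on $\gl_2$ lands in the one-dimensional centre $\LA bc\RA$, and a computation (as in (3), but carried one bracketing further and in $\gl_2$ itself) shows that the scalar attached to a basis element $(y_{i_1}\cdots y_{i_k})(y_{i_{k+1}}y_{i_{k+2}})$ is $\al_{i_3}\cdots\al_{i_k}$ times an expression bilinear in the $\be$- and $\ga$-parameters of the four letters $i_1,i_2,i_{k+1},i_{k+2}$; thus ``$f\in T[\gl_2]$'' is equivalent to a finite linear system on the coordinates of $f$, one equation for each choice of which two letters carry the distinguished $\be$ and $\ga$. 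On the other side, substituting single letters (or, more generally, degree $\ge2$ elements) for $y_1,\dots,y_m$ in $(1\cdots m)(12)$ and rewriting with Lemma~\ref{LFid} produces explicit combinations of these basis elements, all of which are identities by Lemma~\ref{L1e2}(2). The crux, and the main obstacle, is to show that these instances span the whole solution space of the linear system in multidegree $\de$: one inclusion is immediate, and the reverse I would establish by a counting/normal-form argument organised according to how the repeated letters split between the long bracket and the two length-two ones (instances with a repetition inside the short bracket account for the ``$w$-pattern'' basis vectors, those from degree $\ge2$ substitutions supply the rest). Granting this, $f$ is a combination of substitution instances of $(1\cdots m)(12)$, as claimed.
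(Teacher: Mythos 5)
The paper itself contains no proof of this lemma: it appears in the section of ``known results'' and is imported wholesale from Vaughan-Lee's 1970 paper, so the comparison is with that source rather than with an argument in the text. Measured on its own merits, your proposal is genuinely complete for parts (2) and (3): the two-term cancellation via Lemma~\ref{LFid}(2) and $\Char\FF=2$ is exactly right, and the evaluation of the normal-form monomials in $\gl_2/\LA bc\RA$ does show that no nonzero element of $\F^2/(\F^2)^2$ can be an identity. Part (1) is correct in its reduction (centrality of $(\F^2)^2$, the multidegree bookkeeping; you should additionally dispose of the cross terms that arise when the two occurrences of $y_1$ or of $y_2$ receive different multihomogeneous components of the substituted element --- these are instances of the partial linearizations, which vanish by your own part (2), so the fix is cheap). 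However, the final step of (1), namely $w_n\neq0$, silently upgrades Lemma~\ref{LF}(3) from a spanning set to a basis. As stated, that lemma only asserts a span; the linear independence of those normal forms is precisely the deep content of Vaughan-Lee's construction (it is what makes $T[\gl_2]$ non-finitely based) and does not follow from anything stated in Section~2. Either cite the independence explicitly or supply a witness algebra on which $w_n$ does not vanish.

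The genuine gap is part (4). You set up a reasonable framework --- reduce to a fixed non-multilinear multidegree, express $f$ in the normal form of Lemma~\ref{LF}(3), translate ``$f$ is an identity'' into a finite linear system, and compare with the span of substitution instances of $(1\cdots m)(12)$ --- but the decisive inclusion, that these instances exhaust the solution space of the system in every non-multilinear multidegree, is exactly the assertion of part (4), and you explicitly defer it (``the crux \dots\ I would establish by a counting/normal-form argument \dots\ Granting this''). Neither the linear system nor the span of instances is actually computed, and no dimension count is performed, so nothing forces the two subspaces to coincide. As written, part (4) is reduced to itself; this is the step that must be carried out (or correctly attributed to Vaughan-Lee) for the proposal to stand.
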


\section{Generating set}

We use notations and conventions from Section~2.  For short, we denote a product $n(n+1)\ldots (s-1) (s+1) \cdots t$ by $n\cdots \hat{s}\cdots t$, where $r\leq s\leq t$. 

\begin{lemma}\label{L9mine} \mylabel{L9mine}
Every identity $f\in T[\gl_2]\lhd \F$ of multidegree $1^n$ can be represented as the following sum:
\begin{eq}\label{eqf}
f=\sum_{4\leq i\leq n} \al_{i2} (i 3 4 \cdots \hat{i} \cdots n)(21) + 
\sum_{3\leq i\neq j\leq n} \al_{ij} (i 2 3  \cdots \hat{i} \cdots \hat{j}\cdots n)(j1),
\end{eq}
where $\al_{ij}\in\FF$. In particular, $n\geq4$.
\end{lemma}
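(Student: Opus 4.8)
The plan is to read off~\eqref{eqf} directly from the structural results of Section~2, without deriving any new relation in~$\F$. The two inputs are Lemma~\ref{LFid2}(3), which puts a nonzero identity~$f$ into $(\F^2)^2$, and Lemma~\ref{LF}(3), which supplies an explicit $\FF$-spanning family for $(\F^2)^2$; the whole task is then to recognise the multidegree-$1^n$ members of that family as exactly the monomials displayed in~\eqref{eqf}.

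Concretely, the case $f=0$ is trivial (take all $\al_{ij}=0$), so assume $f\neq0$; then Lemma~\ref{LFid2}(3) gives $f\in(\F^2)^2$. Expand $f$ via Lemma~\ref{LF}(3): since $f$ is homogeneous of multidegree~$1^n$, only spanning monomials of multidegree~$1^n$ can occur, and such a monomial is a product $(y_{i_1}\cdots y_{i_k})(y_{i_{k+1}}y_{i_{k+2}})$ with $k+2=n$ and $i_1,\ldots,i_n$ a permutation of $1,\ldots,n$; since $k\geq2$ this already forces $n\geq4$. As the indices are now pairwise distinct, the inequalities of Lemma~\ref{LF}(3) become strict and force: $i_n$ below every other index, hence $i_n=1$; $i_2$ the smallest letter of the first factor, with $i_1>i_2$; and $i_3<\cdots<i_{n-2}$ the remaining letters of the first factor written in increasing order (Lemma~\ref{LFid}(2) would let one sort this tail, but Lemma~\ref{LF}(3) already supplies it sorted).

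Next I would split according to which letter is paired with~$1$ in the second factor. If $i_{n-1}=2$, then the first factor consists of the letters $3,4,\ldots,n$, so $i_2=3$, and, writing $i$ for $i_1$ (which then ranges over $\{4,\ldots,n\}$), the monomial equals $(i\,3\,4\cdots\hat{i}\cdots n)(21)$, a summand of the first sum in~\eqref{eqf}. If instead $i_{n-1}\geq3$, write $j$ for $i_{n-1}$; then $2$ lies in the first factor, so $i_2=2$, and, writing $i$ for $i_1$ (which ranges over $\{3,\ldots,n\}\setminus\{j\}$), the monomial equals $(i\,2\,3\cdots\hat{i}\cdots\hat{j}\cdots n)(j1)$, a summand of the second sum. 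Collecting the coefficients of all spanning monomials that actually appear in the expansion of~$f$ then yields~\eqref{eqf}.

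I do not anticipate a genuine obstacle: the mathematical content is carried entirely by Lemmas~\ref{LFid2}(3) and~\ref{LF}(3). The parts that need care are purely combinatorial — checking that the case split is exhaustive (it is, since $i_{n-1}\in\{2\}\cup\{3,\ldots,n\}$), that the ranges of $i$ and $j$ together with the deletions $\hat{i},\hat{j}$ reproduce~\eqref{eqf} with no monomial missed or miscounted, and the degenerate case $n=4$ (where $k=2$, so the first factor of each monomial has only two letters), which one settles directly by noting that Lemma~\ref{LF}(3) returns the three products $(43)(21)$, $(42)(31)$, $(32)(41)$ — i.e.\ the $n=4$ instances of~\eqref{eqf}.
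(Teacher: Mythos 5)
Your proposal is correct and follows essentially the same route as the paper: invoke Lemma~\ref{LFid2}(3) to place $f$ in $(\F^2)^2$, expand via the spanning set of Lemma~\ref{LF}(3), use multihomogeneity to restrict to multidegree-$1^n$ monomials, deduce that the last letter must be $1$, and split according to whether the letter paired with $1$ is $2$ or some $j\geq3$. Your treatment is in fact slightly more explicit than the paper's (notably the $n\geq4$ deduction and the $n=4$ check), but there is no substantive difference.
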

\begin{proof} We work in $\F$.  We start with the case of  an element  $w=(i_1\cdots i_k) (pq)$ of  multidegree $1^n$ that satisfies conditions from part~3 of Lemma~\ref{LF}, namely, $k\geq2$, $i_1>i_2\leq i_3\cdots \leq i_k$, $p>q\leq i_3$, $i_2\geq q$. Hence $q$ is the least element of $\{1,\ldots,n\}$, i.e., $q=1$. Thus $w=(i_1\cdots i_k) (p1)$, where $\{i_1,\ldots,i_k,p\}=\{2,\ldots, n\}$ and $i_1>i_2\leq i_3\cdots \leq i_k$. Moreover, 
\begin{enumerate}
\item[$\bullet$] if $p=2$, then $w=(i 3 4 \cdots \hat{i} \cdots n)(21)$ for some  $4\leq i\leq n$;

\item[$\bullet$] if $3\leq p\leq n$, then $w=(i 2 3  \cdots \hat{i} \cdots \hat{j}\cdots n)(j1)$ for some $3\leq i\leq n$, $i\neq p$.
\end{enumerate}

Consider an element $f$ from the formulation of lemma. By part~3 of Lemma~\ref{LFid2} we have  that $f\in(\F^2)^2$. Part~3 of Lemma~\ref{LF}  together with the above reasoning completes the proof.
\end{proof}

Given  $f=f(x_1,\ldots,x_n)\in\F$, denote by $f^{(i,j)}$ the result of the following substitution in $f$: $x_i\to b$, $x_j\to c$ and $x_k\to a$ for all $1\leq k\leq n$ with $k\neq i,j$. 
It is easy to verify the following claim.

\begin{lemma}\label{Lfact1} \mylabel{Lfact1} An element $f\in\F$ given by formula~\Ref{eqf} is an identity for $\gl_2$ if and only if $f^{(i,j)}=0$ for all $1\leq i<j\leq n$.
\end{lemma}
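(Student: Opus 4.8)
The plan is to prove both directions by direct computation with the three standard generators $a=E_{22}$, $b=E_{12}$, $c=E_{21}$ of $\gl_2$. One direction is trivial: if $f$ is an identity for $\gl_2$, then every substitution of elements of $\gl_2$ into $f$ gives $0$, and in particular the specific substitution defining $f^{(i,j)}$ (sending $x_i\to b$, $x_j\to c$, and all other variables to $a$) yields $0$; so $f^{(i,j)}=0$ for all $1\le i<j\le n$. The content is in the converse.

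For the converse, suppose $f$ is given by formula~\Ref{eqf} and $f^{(i,j)}=0$ for all $i<j$; I want to conclude $f$ is an identity. Since $f$ is $\NN_0^n$-homogeneous of multidegree $1^n$, it is multilinear, so it suffices to check that $f$ vanishes on every choice of $n$ basis elements of $\gl_2$, i.e.\ on every assignment $x_k\mapsto e_k$ with each $e_k\in\{a,b,c,bc\}$. First I would dispose of the assignments involving the central-like element $bc$: since $(bc)x=0$ for all $x\in\gl_2$, any bracket monomial $(i_1\cdots i_k)(pq)$ appearing in~\Ref{eqf} evaluates to zero as soon as one of the variables is set to $bc$ --- indeed $bc$ can never sit in the innermost position of a left-normalized product of length $\ge 2$ without killing it, and one checks (using $ba=b$, $ca=c$ and the structure of~\Ref{eqf}, where every monomial has length $\ge 3$ in the first factor) that $bc$ always ends up being bracketed against something. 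Hence only assignments into $\{a,b,c\}$ matter.

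Next I would reduce the assignments into $\{a,b,c\}$ to exactly the substitutions $f^{(i,j)}$. The key observations are the relations $ba=b$, $ca=c$, and $aa=0$, $bb=0$, $cc=0$: a monomial $w=(i_1\cdots i_k)(pq)$ of multidegree $1^n$ with every variable sent to $a$, $b$, or $c$ is automatically zero unless at most one variable is sent to $b$ and at most one to $c$ --- if two variables carried $b$, then somewhere in the left-normalized products two $b$'s would be bracketed (directly or after absorbing $a$'s via $ba=b$), giving $bb=0$; similarly for two $c$'s; and with all variables sent to $a$ we get $aa=\cdots=0$ since the first factor has length $\ge 3$. Thus the only potentially nonzero assignments are: exactly one variable $\to b$, exactly one $\to c$, the rest $\to a$ --- which is precisely $f^{(i,j)}$ for some $i\ne j$ --- together possibly with the degenerate cases where only $b$'s-and-$a$'s or only $c$'s-and-$a$'s appear, which again vanish because $bb=0$, $cc=0$, $aa=0$ force a zero once length $\ge 3$. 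So $f$ is an identity iff $f^{(i,j)}=0$ for all ordered pairs; and since in $\gl_2$ over characteristic two swapping $b$ and $c$ is induced by the transpose-type automorphism (or can be checked directly), it suffices to range over $i<j$. I expect the main obstacle to be the careful bookkeeping in this reduction: one must verify rigorously that in \emph{every} bracket monomial of the special form occurring in~\Ref{eqf} the relations $ba=b$, $ca=c$, $(bc)x=0$, $aa=bb=cc=0$ genuinely collapse all unwanted assignments, rather than merely "morally" doing so --- this is the routine but essential combinatorial core that makes the lemma an honest "it is easy to verify" statement.
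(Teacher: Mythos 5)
The paper offers no argument for this lemma at all (it is dismissed with ``it is easy to verify''), so your proposal is supplying the verification rather than paralleling one; and in substance it is correct. The forward direction is indeed trivial, multilinearity of a multidegree-$1^n$ element correctly reduces the converse to evaluations on the basis $\{a,b,c,bc\}$, the centrality of $bc$ kills every assignment involving it, and the multiplication table $ab=ba=b$, $ac=ca=c$, $bc=cb=E_{11}+E_{22}$ central, $aa=bb=cc=0$ really does force every monomial of the form $(i_1\cdots i_k)(p1)$ to vanish unless exactly one variable is sent to $b$ and exactly one to $c$; the transpose map, which is a Lie automorphism in characteristic two fixing $a$ and swapping $b$ with $c$, correctly reduces the unordered pairs to the case $i<j$. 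Two small points should be tightened. First, your repeated appeal to ``the first factor has length $\ge 3$'' is false for $n=4$ (there the first factors in~\Ref{eqf} have length $2$, e.g.\ $(43)(21)$), but it is also unnecessary: the all-$a$ assignment dies already because the second factor is $(aa)=0$, and $bc$ dies because it is central and sits inside some bracket in any monomial of degree $\ge 2$. Second, your mechanism for killing assignments with two $b$'s --- ``the two $b$'s eventually get bracketed with each other after absorbing $a$'s'' --- does not literally cover the case where a $c$ separates them in the left-normalized word (e.g.\ $x_4\to b$, $x_3\to c$, $x_2\to b$ in $(43)(21)$): there the vanishing comes instead from $bc=E_{11}+E_{22}$ being central, so that the running value $I$ annihilates whatever comes next (or annihilates the outer bracket if it is the final value of the first factor). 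The clean statement to prove is that a left-normalized product over $\{a,b,c\}$ of length $\ge 2$ is nonzero only if it contains at most one $b$ and at most one $c$ with all other letters equal to $a$, taking the value $b$, $c$, or the central element; combining this for the two factors gives exactly the claimed reduction. With those repairs your argument is a complete and honest proof of the lemma.
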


\begin{lemma}\label{Lfact2}\mylabel{Lfact2} An element $f\in\F$ given by formula~\Ref{eqf} is an identity for $\gl_2$ if and only if the coefficients $\{\al_{ij}\}$ satisfy the following conditions: 
\begin{eq}\label{eq1-Lmain}
\al_{sr}=\al_{rs} \quad\text{ for all }\quad 3\leq s<r\leq n;
\end{eq}
\begin{eq}\label{eq2-Lmain}
\al_{r2}=\sum_{3\leq j\neq r\leq n}\al_{rj} \quad\text{ for all }\quad 4\leq r\leq n.
\end{eq}

\end{lemma}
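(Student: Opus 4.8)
The plan is to make the criterion of Lemma~\ref{Lfact1} explicit by evaluating each substitution $f^{(i,j)}$. By that lemma it suffices to decide, for every $1\le i<j\le n$, when $f^{(i,j)}=0$. The only facts about $\gl_2$ that enter are $ba=ab=b$, $ca=ac=c$, $bb=cc=0$, $[b,c]=cb=bc$, and that $bc=E_{11}+E_{22}$ is the identity matrix, so $(bc)x=0$ for all $x$; in particular $[P,bc]=0$ for every $P$, whence a summand $(P)(Q)$ of a double product vanishes as soon as one of its inner words evaluates to $bc$, and the only way $(P)(Q)\ne0$ is that the pair of values is $\{b,c\}$, in which case $(P)(Q)=bc$.

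First I would record the evaluation rule for a left-normalized word $u_1\cdots u_m$ with $m\ge2$ and all $u_\ell\in\{a,b,c\}$: pushing the running value through the relations above, the word is nonzero exactly when either exactly one $u_\ell\ne a$ with $\ell\in\{1,2\}$, in which case it equals that $u_\ell$; or exactly two $u_\ell\ne a$, one $b$ and one $c$, the earlier in position $1$ or $2$ and the later in position $m$, in which case it equals $bc$. The inner words occurring in~\Ref{eqf} begin with $i,3$ in the $\al_{i2}$-terms and with $i,2$ in the $\al_{ij}$-terms, and the second inner factor is $(21)$ or $(j1)$; so for a substitution $x_i\to b,\ x_j\to c,\ x_k\to a\ (k\ne i,j)$ the rule pins down exactly which summands survive, each with value $bc$, so that $f^{(i,j)}=\mu_{ij}\,bc$ for an explicit coefficient $\mu_{ij}\in\FF$ equal to a sum of certain of the $\al$'s.

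Then I would carry out the case analysis on the position of $\{i,j\}$ relative to $\{1,2\}$. \emph{(a)} If $3\le i<j\le n$, then $(21)$ evaluates to $[a,a]=0$, killing every $\al_{i2}$-term, and only the two summands with coefficients $\al_{ij}$ and $\al_{ji}$ survive, so $f^{(i,j)}=(\al_{ij}+\al_{ji})\,bc$, and its vanishing is exactly relation~\Ref{eq1-Lmain} for the pair $i,j$. \emph{(b)} If $i=1$ and $j=r$ with $4\le r\le n$, then $f^{(1,r)}=\bigl(\al_{r2}+\sum_{3\le j\ne r\le n}\al_{rj}\bigr)bc$, and its vanishing is exactly relation~\Ref{eq2-Lmain} for $r$. \emph{(c)} For the remaining pairs one computes $\mu_{12}=\sum_{3\le i\ne j\le n}\al_{ij}$, $\mu_{13}=\sum_{4\le i\le n}(\al_{i2}+\al_{3i})$, $\mu_{23}=\sum_{4\le i\le n}(\al_{i2}+\al_{i3})$, and $\mu_{2r}=\al_{r2}+\sum_{3\le i\ne r\le n}\al_{ir}$ for $4\le r\le n$.

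Finally I would observe that~\Ref{eq1-Lmain} and~\Ref{eq2-Lmain} together force every $\mu_{ij}$ of part~(c) to vanish: relation~\Ref{eq1-Lmain} says that $(\al_{ij})$ is symmetric in indices from $\{3,\ldots,n\}$, so any sum of such $\al$'s over ordered pairs of distinct indices is zero in characteristic two, and using this symmetry to turn $\al_{ir}$ into $\al_{ri}$ and $\al_{3i}$ into $\al_{i3}$, and then applying~\Ref{eq2-Lmain}, collapses each of $\mu_{12},\mu_{13},\mu_{23},\mu_{2r}$ to $0$; conversely parts~(a) and~(b) already single out~\Ref{eq1-Lmain} and~\Ref{eq2-Lmain} among the conditions $\mu_{ij}=0$. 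Hence $f^{(i,j)}=0$ for all $1\le i<j\le n$ if and only if~\Ref{eq1-Lmain} and~\Ref{eq2-Lmain} hold, which by Lemma~\ref{Lfact1} is the assertion. The part I expect to be the real work is the combinatorial bookkeeping: tracking, for each substitution, into which slot of $(i\,3\,4\cdots\hat{i}\cdots n)$ or $(i\,2\,3\cdots\hat{i}\cdots\hat{j}\cdots n)$ the distinguished letters $b,c$ fall (one of the first two slots, or the last slot), getting the boundary indices $i=3$, $j=n$ and $r=3$ right, and then confirming that the part-(c) identities among the $\mu_{ij}$ are indeed consequences of~\Ref{eq1-Lmain} and~\Ref{eq2-Lmain} in characteristic two.
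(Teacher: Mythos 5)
Your proposal is correct and follows essentially the same route as the paper: evaluate $f^{(i,j)}$ for each pair via Lemma~\ref{Lfact1}, read off \Ref{eq1-Lmain} from the pairs $3\le i<j\le n$ and \Ref{eq2-Lmain} from the pairs $(1,r)$ with $r\ge4$, and check that the remaining conditions $\mu_{12},\mu_{13},\mu_{23},\mu_{2r}$ (whose values match the paper's) are linear consequences of these in characteristic two. The only difference is that you make explicit the word-evaluation rule that the paper leaves implicit.
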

\begin{proof}
By the condition of Lemma~\ref{L9mine} we have that $n\geq4$. Note that it could be more convenient for the reader to consider the case of $n=4$ separately (see Example~\ref{ex-n4} below).

Assume that $3\leq s<r\leq n$. Then the equality $f^{(s,r)}=0$  is equivalent to~\Ref{eq1-Lmain}.

Assume that $4\leq r\leq n$. Then the equality $f^{(1,r)}=0$ is equivalent to~\Ref{eq2-Lmain}. 

\noindent{}The equality $f^{(2,r)}=0$ is equivalent to 
$$\al_{r2}+\sum_{3\leq i\neq r\leq n}\al_{ir}=0,$$
which is a linear combination of~\Ref{eq1-Lmain} and~\Ref{eq2-Lmain}.

The equality  $f^{(1,3)}=0$ is equivalent to
\begin{eq}\label{eq3-Lmain}
\sum_{4\leq i\leq n} \al_{i2} + \sum_{3< j\leq n} \al_{3j}=0.
\end{eq}%
Denote the second sum of~\Ref{eq3-Lmain} by $A$.  Applying~\Ref{eq2-Lmain} to $\al_{i2}$ we obtain that~\Ref{eq3-Lmain} is a linear combination of~\Ref{eq2-Lmain} and
$\sum\al_{ij}=A$, where the sum ranges over all $4\leq i\leq n$, $3\leq j\leq n$ with $i\neq j$. The left hand side of this equality is equal to
$$\sum_{4\leq i\leq n}\al_{i3} + \sum_{4\leq i\neq j\leq n} \al_{ij}=/\text{see }\Ref{eq1-Lmain}/= A + 2\!\!\!
\sum_{4\leq i<j\leq n}\al_{ij}.$$
Thus \Ref{eq3-Lmain} is a linear combination of~\Ref{eq1-Lmain} and~\Ref{eq2-Lmain}.

The equality $f^{(2,3)}=0$ is equivalent to  
$$\sum_{4\leq i\leq n} \al_{i2} + \sum_{3< i\leq n} \al_{i3}=0,$$
which is a linear combination of~\Ref{eq1-Lmain} and~\Ref{eq3-Lmain}.

The equality $f^{(1,2)}=0$ is equivalent to 
$$\sum_{3\leq i\neq j\leq n} \al_{ij}=0.$$
Since the left hand side of this equality is 
$$\sum_{3\leq i<j\leq n} (\al_{ij} + \al_{ji})$$
it follows from~\Ref{eq1-Lmain}. The claim is proven.
\end{proof}

For $n\geq 4$ denote
$$f_n = (12)(34\cdots n)+(13)(24\ldots n) +(14)(23\cdots n) \in\F.$$
In particular, $f_4=(12)(34)+(13)(24)+(14)(23)$.

\begin{example}\label{ex-n4} \mylabel{ex-n4} In this example we illustrate the proof of Lemma~\ref{Lfact2} in case $n=4$. Assume that $0\neq f\in\F$ is an identity for $\gl_2$ of multidegree $1^4$. By Lemma~\ref{L9mine}, $f=\al_{42} (43)(21) + \al_{43} (42)(31) + \al_{34}(32)(41)$. Since $f^{(1,2)}=0$, then $\al_{43}+\al_{34}=0$. Since $f^{(1,3)}=0$, then $\al_{42}+\al_{34}=0$. Since $f^{(1,4)}=0$, then $\al_{42}+\al_{43}=0$. Thus,  the coefficients $\{\al_{ij}\}$ satisfy conditions~\Ref{eq1-Lmain} and~\Ref{eq2-Lmain} for $n=4$. Moreover $f=\al_{42} f_4$. By straightforward computations (or applying Lemma~\ref{Lfact1}) we can verify that $f_4$ is actually an identity for $\gl_2$. Therefore modulo multiplication by a constant, $f=f_4$.
\end{example}

\begin{lemma}\label{Lmultlin} \mylabel{Lmultlin}
Every identity $f\in T[\gl_2]\lhd \F$ of multidegree $1^n$ is a linear combination of identities $f_n(x_{\si(1)},\ldots,x_{\si(n)})=0$ for $n\geq 4$ and $\si\in S_n$.
\end{lemma}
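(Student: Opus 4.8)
The plan is to combine the description of the multidegree-$1^n$ identities furnished by Lemmas~\ref{L9mine} and~\ref{Lfact2} with an explicit computation of certain substitution instances of $f_n$. By Lemma~\ref{L9mine} an identity $f\in T[\gl_2]\lhd\F$ of multidegree $1^n$ is an $\FF$-combination of the (distinct, canonical) monomials appearing in~\Ref{eqf}, and by Lemma~\ref{Lfact2} such a combination is an identity exactly when its coefficients satisfy~\Ref{eq1-Lmain} and~\Ref{eq2-Lmain}. Eliminating the $\al_{r2}$ via~\Ref{eq2-Lmain} and the $\al_{rs}$ with $r>s$ via~\Ref{eq1-Lmain}, the admissible coefficient tuples form a space of dimension $\binom{n-2}{2}$, with free parameters the $\al_{st}$ for $3\le s<t\le n$. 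Denote by $h_{st}$ the identity all of whose free parameters vanish except $\al_{st}=1$; the $h_{st}$ span (in fact form a basis of) the space $V_n$ of multidegree-$1^n$ identities.

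Now $f_n$ is itself an identity for $\gl_2$ — either by the direct evaluation criterion of Lemma~\ref{Lfact1}, or because the reduction below shows its canonical form has coefficients $\al_{42}=\al_{43}=\al_{34}=1$ and the rest $0$, which satisfy~\Ref{eq1-Lmain}--\Ref{eq2-Lmain} — so every instance $f_n(x_{\si(1)},\dots,x_{\si(n)})$, $\si\in S_n$, also lies in $V_n$. It therefore suffices to show these instances span $V_n$, and for that I would exhibit, for each pair $3\le s<t\le n$, a permutation $\si_{st}$ with $f_n(x_{\si_{st}(1)},\dots,x_{\si_{st}(n)})=h_{st}$. I take $\si_{st}$ to fix $1$ and $2$, to send $3\mapsto s$, $4\mapsto t$, and to act on $\{5,\dots,n\}$ as the increasing bijection onto $\{3,\dots,n\}\setminus\{s,t\}$; then $\si_{34}=\mathrm{id}$, so already $f_n=h_{34}$.

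To prove $f_n(x_{\si_{st}})=h_{st}$ I would reduce each of its three summands, all of the form $(1\,m)(w_1w_2\cdots w_{n-2})$ with $m\in\{2,s,t\}$ and $\{w_1,\dots,w_{n-2}\}=\{1,\dots,n\}\setminus\{1,m\}$, to the canonical form of Lemma~\ref{LF}(3). The permitted manipulations are: anticommutativity, which in characteristic two reads $(uv)=(vu)$ and $(P)(Q)=(Q)(P)$; Lemma~\ref{LFid}(2) with $r=0$, which moves all loose letters of one Lie factor onto the other, shrinking that factor to length two and freeing the least letter $1$ to enter the two-letter factor; and the Jacobi identity applied to $w_1w_2w_3$ when $w_1<w_2$, which splits a summand into at most two canonical monomials. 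Re-sorting produces corrections in $(\F^2)^2$, annihilated after the next multiplication since $(\F^2)^3=0$ (Lemma~\ref{LF}(4)), so the reduction is clean. Its outcome is that the summand with $m=t$ contributes $\al_{st}$, the one with $m=s$ contributes $\al_{ts}$, and the one with $m=2$ contributes $\al_{t2}$ — plus $\al_{s2}$ as well when $s\ge4$; by~\Ref{eq1-Lmain}--\Ref{eq2-Lmain} these are precisely the nonzero raw coefficients of $h_{st}$. Thus $\Fspan\{f_n(x_\si):\si\in S_n\}\supseteq\Fspan\{h_{st}:3\le s<t\le n\}=V_n$, while the reverse inclusion is clear, and the lemma follows.

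The real work is the canonicalization just described: each summand of degree $n$ requires a chain of roughly $n$ of the above rewrites, and one must verify that the tail letters — the images of $5,\dots,n$ — never block the passage to a canonical monomial, and that the Jacobi split is uniform over all positions of $\{s,t\}$. In particular, when $s=3$ each summand reduces to a single monomial, while when $s\ge4$ the summand with $m=2$ yields two monomials — exactly what is needed, since $h_{st}$ then has four nonzero raw coefficients rather than three. The base case $n=4$ is Example~\ref{ex-n4}, and $n=5,6$ are handy for calibrating the bookkeeping.
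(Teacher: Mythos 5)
Your proposal is correct and follows essentially the same route as the paper: both use Lemmas~\ref{L9mine} and~\ref{Lfact2} to exhibit the free parameters $\al_{st}$ ($3\le s<t\le n$) and the corresponding spanning identities, and then identify each such identity with a permuted instance of $f_n$ by reduction to the canonical form of Lemma~\ref{LF}(3) via Lemma~\ref{LFid}(2), anticommutativity, the Jacobi identity, and the fact that $(\F^2)^3=0$. The only cosmetic difference is that you use one uniform permutation $\si_{st}$ where the paper splits into the cases $p=3$, $p=4$, $p\ge5$ with transpositions, and your predicted coefficient outcome (three monomials when $s=3$, four when $s\ge4$) agrees with the paper's computation.
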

\begin{proof}By Lemma~\ref{L9mine}, we have that $f$ is given by formula~\Ref{eqf} for some coefficients $\al_{ij}\in\FF$. It is convenient to form the vector $\un{\al}=(\al_{r2}, \al_{ij}\,|\,4\leq r\leq n,\; 3\leq i\neq j\leq n)$. Since $f$ is uniquely determined by $\un{\al}$, we denote $f$ by $f_{\un{\al}}$. 
By Lemma~\ref{Lfact2}, coefficients $\{\al_{ij}\}$ satisfy  conditions \Ref{eq1-Lmain} and~\Ref{eq2-Lmain}. 
Hence coefficients $\{\al_{sr}\,|\,3\leq s<r\leq n\}$ are ``free'' variables and the rest of coefficients are linear combinations of them. If $\al_{sr}=1$ for some $3\leq s<r\leq n$  and  $\al_{ij}=0$ for all $3\leq i<j\leq n$ with $(i,j)\neq (s,r)$, then we denote $f_{\un{\al}}$ by $f_{(sr)}$. Obviously, for each $\ga,\de\in\FF$, we have  $\ga f_{\un{\al}} + \delta f_{\un{\be}} =f_{ \ga \un{\al} + \de \un{\be}}$. Thus any identity  $f_{\un{\al}}$ is a linear combination of identities $f_{(sr)}$ with $3\leq s<r\leq n$.

To compute $f_{(pq)}$ for $1\leq p<q\leq n$ we point out that $f_{(pq)}=f_{\un{\al}}$, where the only non-zero coefficients from $\{\al_{ij}\}$ are
\begin{enumerate}
\item[$\bullet$] $\al_{pq}=\al_{p2}=\al_{q2}=1$ in case $p\neq 4$;

\item[$\bullet$] $\al_{3q}=\al_{q2}=1$ in case $p=3$;
\end{enumerate}

Assume $p=3$. Then 
$$f_{(pq)}=(q 34 \cdots \hat{q} \cdots n)(21) + 
(324 \cdots \hat{q} \cdots n)(q1) + 
(q 2 4 \cdots \hat{q} \cdots n) (31).$$
For this proof it is convenient to rewrite $f_n$ in $\F$ as
$$(435\cdots n)(21)+(325\ldots n)(41) +(425\cdots n)(31).$$
\noindent{}In case $q=4$ we have $f_{(pq)}=f_n$ in $\F$. In case $q\geq5$ the result of substitution $x_4\to x_q$, $x_q\to x_4$ in $f_n$ is equal to $f_{(pq)}$ in $\F$, because the identity from part~(2) of Lemma~\ref{LFid} holds in $\F$. 

Assume $p\neq 3$. Then
$$f_{(pq)}=(p 34 \cdots \hat{p} \cdots n)(21) + 
(q 34 \cdots \hat{q} \cdots n)(21) + $$
$$(p23 \cdots \hat{p}\cdots \hat{q} \cdots n)(q1) + 
(q 2 3 \cdots \hat{p} \cdots \hat{q} \cdots n) (p1).$$
\noindent{}Denote by $h$ the result of substitution $x_3\to x_q$, $x_q\to x_3$ in $f_n$. Applying identity from part~(2) of Lemma~\ref{LFid} we obtain that 
$$h=(4q 35 \cdots \hat{q}\cdots n) (21) + (q 235 \cdots \hat{q}\cdots n)(q1) + (4235\cdots \hat{q}\cdots n)(q1).$$

In case $p=4$ we have that
$$f_{(pq)}+h=(435\cdots n)(21) + (q345\cdots \hat{q}\cdots n)(21) + (4q25 \cdots \hat{q}\cdots n)(21).$$
Applying part~(2) of Lemma~\ref{LFid} we can rewrite the first summand as 
$$(34q5\cdots \hat{q}\cdots n) (21)= 
(4q35\cdots \hat{q}\cdots n) (21) + 
(q345\cdots \hat{q}\cdots n) (21)$$ 
(see also the Jacobi identity). Thus $f+h=0$ in $\F$ and the claim is proven.

Assume $p\geq 5$. Denote by $g$ the result of substitution $x_4\to x_p$, $x_p\to x_4$ in $f_n$. Applying identity from part~(2) of Lemma~\ref{LFid} we obtain that $g$ is equal to
$$(pq34\cdots \hat{p}\cdots \hat{q} \cdots n)(21) +
(q234\cdots \hat{p}\cdots \hat{q}\cdots n)(p1)+
(p234\cdots \hat{p}\cdots \hat{q}\cdots n)(q1).
$$ 
Then $f_{(pq)}+g$ is equal to
$$(p34\cdots\hat{p}\cdots n)(21) + 
(q34\cdots\hat{q}\cdots n)(21) +
(pq34\cdots\hat{p}\cdots\hat{q}\cdots n)(21).$$
Applying part~(2) of Lemma~\ref{LFid} to the first summand and the second summand we obtain $(3pq4\cdots\hat{p}\cdots\hat{q}\cdots n)(21)$ and $(q3p4\cdots\hat{p}\cdots\hat{q}\cdots n)(21)$, respectively. Then the Jacobi identity implies that $ f_{(pq)}+g=0$. The claim is proven.
\end{proof}

Now we can proof Theorem~\ref{theo-main}.
\begin{proof}
At the end of paper~\cite{VaughanLee_1970} it is shown that the T-ideal $T[\gl_2]\lhd \F$ is generated by 
\begin{enumerate}
\item[$\bullet$] some multilinear identities;

\item[$\bullet$] identities $(12)(12\cdots n)$ for all $n\geq3$.
\end{enumerate}
Lemma~\ref{Lmultlin} and the definition of $\F$ together with part~(1) of Lemma~\ref{L1e2} conclude the proof.
\end{proof}


\end{document}